\newtheorem{theorem}{Theorem}[section]
\newtheorem{lem}[theorem]{Lemma}
\newtheorem{prop}[theorem]{Proposition}
\newtheorem{cor}[theorem]{Corollary}
\theoremstyle{definition}
\newtheorem{definition}[theorem]{Definition}
\theoremstyle{remark}
\numberwithin{equation}{section}
\begin{document}

\newcommand{\spacing}[1]{\renewcommand{\baselinestretch}{#1}\large\normalsize}
\spacing{1.14}

\title{Some Berwald Spaces of Non-positive flag Curvature}

\author {H. R. Salimi Moghaddam}

\address{Department of Mathematics, Faculty of  Sciences, University of Isfahan, Isfahan,81746-73441-Iran.} \email{salimi.moghaddam@gmail.com and hr.salimi@sci.ui.ac.ir}

\keywords{invariant metric, flag curvature,
Berwald space, Randers  space, 3-dimensional Lie group\\
AMS 2000 Mathematics Subject Classification: 22E60, 53C60, 53C30.}


\begin{abstract}
In this paper by using left invariant Riemannian metrics on some
3-dimensional Lie groups we construct some complete non-Riemannian
Berwald spaces of non-positive flag curvature and several families
of geodesically complete locally Minkowskian spaces of zero
constant flag curvature.
\end{abstract}

\maketitle


\section{\textbf{Introduction}}\label{intro}
The study of Riemannian manifolds with some curvature properties
is one of interesting problems in differential geometry. The
family of Lie groups equipped with invariant Riemannian metrics is
one of these important spaces. Riemannian Lie groups with some
sectional curvature properties have been studied by many
mathematicians and many interesting and important results have been
found on these manifolds (for example see \cite{Mi} and \cite{No}.).\\
In the recent years with extension of studies on Finsler manifolds
and applications of Finsler spaces in physics, Finsler spaces with
some curvature properties have been considered by many Finsler
geometers (\cite{BaChSh}, \cite{BaRo}, \cite{BaSh}, \cite{BeFa}
and \cite{Sh2}). Invariant Finsler metrics on Lie groups and
homogeneous spaces are of the best spaces for finding spaces with
some curvature properties. Some of these metrics and their flag
curvatures have been studied in \cite{DeHo1}, \cite{DeHo2},
\cite{EsSa1}, \cite{EsSa2}, \cite{Sa1} and \cite{Sa2}. An
important family of Finsler manifolds which can help us with
studying Finsler geometry is the family of flat Finsler spaces
(that is Finsler manifolds of zero constant flag curvatur). In the
present paper, after some preliminaries, by using left invariant
Riemannian metrics and left invariant vector fields on some
3-dimensional Lie groups we construct some geodesically complete
left invariant locally Minkowskian Randers metrics with zero
constant flag curvature. Some Finsler spaces with non-positive
flag curvature have been studied by Z. Shen in \cite{Sh2}. He
showed that every Finsler metric with negative flag curvature and
constant $S$-curvature must be Riemannian if the manifold is
compact. Also S. Deng and Z. Hou have studied homogeneous Finsler
spaces of non-positive curvature. They proved that a homogeneous
Finsler space with non-positive flag curvature and strictly
negative Ricci scalar is a simply connected manifold \cite{DeHo3}.
In the last section of the present paper we give some complete
left invariant non-Riemannian Berwald spaces of non-positive flag
curvature.

\section{\textbf{Preliminaries}}
Let $M$ be a smooth $n-$dimensional manifold and $TM$ be its
tangent bundle. A Finsler metric on $M$ is a non-negative function
$F:TM\longrightarrow \Bbb{R}$ which has the following properties:
\begin{enumerate}
    \item $F$ is smooth on the slit tangent bundle
    $TM^0:=TM\setminus\{0\}$,
    \item $F(x,\lambda y)=\lambda F(x,y)$ for any $x\in M$, $y\in T_xM$ and $\lambda
    >0$,
    \item the $n\times n$ Hessian matrix $[g_{ij}(x,y)]=[\frac{1}{2}\frac{\partial^2 F^2}{\partial y^i\partial
    y^j}]$ is positive definite at every point $(x,y)\in TM^0$.
\end{enumerate}
An important family of Finsler metrics introduced by G. Randers
(\cite{Ra}) in 1941 is Randers metrics with following form:
\begin{eqnarray}
   F(x,y)=\sqrt{g_{ij}(x)y^iy^j}+b_i(x)y^i,
\end{eqnarray}
where $g=(g_{ij}(x))$ is a Riemannian metric and $b=(b_i(x))$ is a
nowhere zero 1-form on $M$. It has been shown that $F$ is a
Finsler metric if and only if $\|b\|=b_i(x)b^i(x)<1$, where
$b^i(x)=g^{ij}(x)b_j(x)$ and $[g^{ij}(x)]$ is the inverse matrix
of $[g_{ij}(x)]$.\\
For constructing Randers metrics we can also use vector fields as
follows:
\begin{eqnarray}\label{F}
  F(x,y)=\sqrt{g(x)(y,y)}+g(x)(X(x),y),
\end{eqnarray}
where $X$ is a vector field on $M$ such that $\|X\|=\sqrt{g(X,X)}<1$.\\
A Riemannian Metric $g$ on the Lie group $G$ is called left
invariant if
\begin{eqnarray}
  g(x)(y,z)=g(e)(T_xl_{x^{-1}}y,T_xl_{x^{-1}}z) \ \ \ \ \forall x\in
  G, \forall y,z\in T_xG,
\end{eqnarray}
where $e$ is the unit element of $G$.\\
Similar to the Riemannian case, a Finsler metric is called left
invariant if
\begin{eqnarray}
  F(x,y)=F(e,T_xl_{x^{-1}}y).
\end{eqnarray}

For constructing left invariant Randers metrics on Lie groups we
can use left invariant Riemannian metrics and invariant vector
fields. Suppose that $G$ is a Lie group, $g$ is a left invariant
Riemannian metric and $X$ is a left invariant vector field such
that $\sqrt{g(X,X)}<1$, then we can define $F$ as the formula
\ref{F}. Clearly $F$ is left invariant. If $X$ be parallel with
respect to the Levi-Civita connection induced by the Riemannian
metric $g$
then $F$ is of Berwald type.\\
Flag curvature, which is a generalization of the concept of
sectional curvature in Riemannian geometry, is one of the
fundamental quantities which associates with a Finsler space. Flag
curvature is computed by the following formula:
\begin{eqnarray}
  K(P,Y)=\frac{g_Y(R(U,Y)Y,U)}{g_Y(Y,Y).g_Y(U,U)-g_Y^2(Y,U)},
\end{eqnarray}
where $g_Y(U,V)=\frac{1}{2}\frac{\partial^2}{\partial s\partial
t}(F^2(Y+sU+tV))|_{s=t=0}$, $P=span\{U,Y\}$,
$R(U,Y)Y=\nabla_U\nabla_YY-\nabla_Y\nabla_UY-\nabla_{[U,Y]}Y$ and
$\nabla$ is the Chern connection induced by $F$ (see \cite{BaChSh}
and \cite{Sh1}.).


\section{\textbf{Some geodesically complete locally Minkowskian  spaces of zero constant flag curvature}}

We begin this section with a study about completeness of a special
family of Randers metrics.
\begin{definition}
The Riemannian manifold $(M,g)$ is said to be homogeneous if the
group of isometries of $M$ acts transitively on $M$ (see
\cite{BeEhEa}.).
\end{definition}
\begin{theorem}
Let $(M,g)$ be a homogeneous Riemannian manifold. Suppose that $F$
is a Randers metric of Berwald type defined by $g$ and a 1-form
$b$. Then $(M,F)$ is geodesically complete.
\end{theorem}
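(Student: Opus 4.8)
The plan is to reduce the geodesic completeness of the Finsler manifold $(M,F)$ to the geodesic completeness of the underlying Riemannian manifold $(M,g)$, by exploiting the fact that a Berwald metric shares its geodesics with a background Riemannian metric.

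First I would recall that every homogeneous Riemannian manifold is geodesically complete. Since the isometry group of $(M,g)$ acts transitively, there is a positive number $\varepsilon$ such that for every point $p\in M$ the Riemannian exponential map $\exp_p$ is defined on the ball of radius $\varepsilon$ in $T_pM$; by homogeneity this $\varepsilon$ can be chosen uniformly, as isometries carry geodesics to geodesics and preserve the relevant radius. Consequently any $g$-geodesic can be prolonged by arclength $\varepsilon$ beyond either endpoint, and iterating this shows that every maximal $g$-geodesic is defined on all of $\Bbb{R}$. Thus $(M,g)$ is complete, and equivalently, by the Hopf--Rinow theorem, the metric space associated to $g$ is complete.

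Next I would use the Berwald hypothesis to identify the geodesics of $F$ with those of $g$. Because $F$ is of Berwald type, as noted in the preliminaries the vector field $X$ (equivalently the 1-form $b$) is parallel with respect to the Levi--Civita connection $\nabla$ of $g$. A direct computation of the spray coefficients of a Randers metric $F=\sqrt{g(y,y)}+b(y)$ shows that when $b$ is parallel the correction terms involving the symmetric and antisymmetric parts of $\nabla b$ all vanish, so the geodesic spray of $F$ coincides with that of $g$; hence the Chern connection of $F$ agrees with $\nabla$. Therefore a curve $\gamma$ is a constant-speed geodesic of $(M,F)$ if and only if it satisfies $\nabla_{\dot\gamma}\dot\gamma=0$, that is, if and only if it is a geodesic of $(M,g)$. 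Let $\gamma$ be any geodesic of $(M,F)$ with maximal domain; by this identification $\gamma$ is a geodesic of the complete manifold $(M,g)$, so it extends to all of $\Bbb{R}$ in both directions, and being an $F$-geodesic it is $F$-complete forward and backward. Since $\gamma$ was arbitrary, $(M,F)$ is geodesically complete.

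I expect the main obstacle to be the middle step: one must verify carefully that parallelism of $b$ forces the spray, and hence the geodesics, of the Randers metric $F$ to agree with those of $g$, rather than merely producing some linear connection. Once this coincidence of geodesics is established, the role of homogeneity is confined to the classical completeness of homogeneous Riemannian manifolds, and the desired conclusion follows at once.
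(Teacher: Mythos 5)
Your proposal is correct and follows essentially the same route as the paper: identify the geodesics of $F$ with those of $g$ via the Berwald hypothesis (Chern connection coinciding with the Levi--Civita connection of $g$), then invoke the geodesic completeness of homogeneous Riemannian manifolds. The paper simply cites these two facts, whereas you supply the standard proofs (the uniform-$\varepsilon$ prolongation argument for homogeneity, and the vanishing of the spray correction terms when $b$ is parallel); no substantive difference in approach.
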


\begin{proof}
Since $F$ is of Berwald type therefore the Chern connection of $F$
and the Levi-Civita connection of $g$ coincide and hence their
geodesics coincide. On the other hand $(M,g)$ is a homogeneous
Riemannian manifold, hence $(M,g)$ is geodesically complete (see
\cite{BeEhEa} page 185.). Therefore $(M,F)$ is geodesically
complete.
\end{proof}

\begin{cor}
In above theorem, suppose that $M$ is connected then by using
Hofp-Rinow theorem for Finsler manifolds, $(M,F)$ is complete.
\end{cor}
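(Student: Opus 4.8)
The plan is to combine the geodesic completeness already established in the preceding theorem with the Finslerian analogue of the Hopf--Rinow theorem. Since the statement explicitly invokes the Hopf--Rinow theorem for Finsler manifolds, the strategy is to verify that its hypotheses are met and then read off the desired conclusion. First I would recall from the theorem that $(M,F)$ is geodesically complete; the additional assumption here is only that $M$ is connected, which is precisely the connectivity hypothesis needed to run Hopf--Rinow.

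The key steps, in order, are as follows. First, observe that because $F$ is of Berwald type, its geodesics coincide with the Levi-Civita geodesics of $g$, and these extend over all of $\Bbb{R}$ since the homogeneous Riemannian manifold $(M,g)$ is complete. Hence the geodesics of $F$ are defined for all parameter values, in both the forward and backward directions. Second, I would invoke the version of the Hopf--Rinow theorem for connected Finsler manifolds (see \cite{BaChSh}), which asserts that forward geodesic completeness of a connected Finsler manifold implies forward metric completeness, together with the existence of a globally minimizing geodesic between any ordered pair of points. Applying this to $(M,F)$ then yields that $(M,F)$ is complete.

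The main point requiring care will be the asymmetry intrinsic to a Randers metric: because $F(x,y)\neq F(x,-y)$ in general, the induced distance function need not be symmetric, so one must distinguish forward from backward completeness. The remedy is exactly the Berwald structure: since the $F$-geodesics are unreparametrized Riemannian geodesics of the complete metric $g$, every geodesic extends infinitely in \emph{both} directions, so geodesic completeness holds simultaneously in the forward and backward senses. This resolves the asymmetry and lets the Finslerian Hopf--Rinow theorem deliver full metric completeness rather than merely one-sided completeness.
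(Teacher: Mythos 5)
Your proposal is correct and follows the same route the paper intends: the corollary is stated with its justification built in (geodesic completeness from the preceding theorem plus the Finslerian Hopf--Rinow theorem on a connected manifold), and the paper offers no further proof. Your additional remark that the Berwald structure gives completeness in both the forward and backward senses, despite the asymmetry of the Randers metric, is a worthwhile clarification but does not change the argument.
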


\begin{cor}
Let $G$ be a Lie group and $g$ be a left invariant Riemannian
metric on $G$. Also suppose that $X$ is a parallel vector field
with respect to the Levi-Civita connection of $g$ such that
$\|X\|<1$. Then the Randers metric defined by $g$, $X$ and the
relation \ref{F} is geodesically complete.
\end{cor}

Finding Riemannian spaces of constant sectional curvature is an
interesting problem in Riemannian geometry. A class of Riemannian
spaces of constant sectional curvature is the class of flat
manifolds (that is Riemannian spaces of zero constant sectional
curvature). Lie groups equipped with invariant Riemannian metrics
are suitable manifolds for finding spaces of constant sectional
curvature. For example Abelian Lie groups with left invariant
Riemannian metrics are flat. Therefore we can have the following
theorem:

\begin{theorem}
Let $G$ be an abelian Lie group equipped with a left invariant
Riemannian metric $g$ and let $\frak{g}$ be the Lie algebra of
$G$. Suppose that $X\in\frak{g}$ is a left invariant vector field
with $\sqrt{g(X,X)}<1$. Then the Randers metric $F$ defined by the
formula \ref{F} is a flat geodesically complete locally
Minkowskian metric on $G$.
\end{theorem}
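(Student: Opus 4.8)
The plan is to reduce all three assertions to two facts about the left invariant vector field $X$: that it is parallel for the Levi-Civita connection of $g$, and that the Riemannian curvature of $g$ vanishes identically. Granting these, the Berwald property, geodesic completeness, and flag curvature statements all follow from the structural results already recorded earlier in the excerpt, and the locally Minkowskian conclusion follows from a short coordinate computation.

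First I would establish that $X$ is parallel. Since $G$ is abelian, every Lie bracket of left invariant vector fields vanishes, and since $g$ is left invariant the function $g(U,V)$ is constant for any left invariant $U,V$. Feeding this into the Koszul formula, whose first three (derivative) terms then drop out,
\[
2g(\nabla_U V, W) = g([U,V],W) - g([U,W],V) - g([V,W],U),
\]
one sees that $\nabla_U V = 0$ for all left invariant $U,V$; in particular $\nabla_U X = 0$ for every $U$, so $X$ is parallel. Because $X$ is parallel and $\sqrt{g(X,X)} < 1$, the hypotheses of the Corollary above are met, so $F$ is of Berwald type and $(G,F)$ is geodesically complete.

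For the flag curvature I would use that, for a Berwald metric, the Chern connection coincides with the Levi-Civita connection of $g$; hence the operator $R(U,Y)Y = \nabla_U\nabla_Y Y - \nabla_Y\nabla_U Y - \nabla_{[U,Y]}Y$ appearing in the flag curvature formula is exactly the Riemannian curvature of $g$. Since an abelian Lie group with a left invariant metric is flat, this curvature is identically zero, so $K(P,Y) = 0$ for every flag, i.e. $F$ has zero constant flag curvature. Finally, for the locally Minkowskian property I would pass to exponential coordinates: because $G$ is abelian the exponential map is a homomorphism, and in the resulting coordinates the left invariant vector fields become the constant coordinate fields, so both the components $g_{ij}$ and the components of $X$ are constant. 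Substituting into \ref{F} then exhibits $F$ as a function of the fibre variable $y$ alone, independent of the base point, which is precisely the defining condition for a locally Minkowskian metric. The only step requiring genuine care is this last one — verifying the constancy of the metric components in exponential coordinates, which is exactly where the abelian hypothesis is used essentially; the remaining assertions are direct applications of the earlier results.
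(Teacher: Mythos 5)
Your argument is correct, and for the Berwald/completeness and flag--curvature parts it follows the same route as the paper: the Koszul formula with all brackets and derivative terms vanishing gives $\nabla_U V=0$ for left invariant $U,V$, hence $X$ is parallel, $F$ is Berwald and geodesically complete by the earlier corollary, and the vanishing of the Riemannian curvature of $g$ forces the flag curvature of $F$ to vanish because the Chern connection of a Berwald--Randers metric coincides with the Levi-Civita connection of $g$. The one place where you genuinely diverge is the locally Minkowskian conclusion. The paper gets this by quoting Proposition 10.5.1 of Bao--Chern--Shen, which says that any flat Berwald metric is locally Minkowskian; you instead give a direct, self-contained argument via exponential coordinates: since $G$ is abelian, $\exp$ is a homomorphism whose differential $(dL_{\exp X})_e\circ\frac{1-e^{-\mathrm{ad}_X}}{\mathrm{ad}_X}$ reduces to $(dL_{\exp X})_e$ and is everywhere invertible, so left invariant vector fields become constant coordinate fields, the components $g_{ij}$ and the components of $X$ are constant, and $F$ is manifestly independent of the base point. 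Your version is more elementary and exhibits the Minkowskian coordinates explicitly, at the cost of using the abelian structure in an essential way; the paper's citation-based step is shorter and generalizes to the non-abelian flat Berwald examples appearing later in the same section (e.g.\ the $E(2)$ case), where no such global linearizing coordinates are available. Both are valid proofs of the stated theorem.
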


\begin{proof}
Assume that $U,V,W\in\frak{g}$, now by using the formula
\begin{eqnarray}\label{nabla}
  2g(\nabla_UV,W)=g([U,V],W)-g([V,W],U)+g([W,U],V),
\end{eqnarray}
and the fact that $G$ is abelian we have $\nabla_YX=0$ for any
$Y\in\frak{g}$. Hence $X$ is parallel with respect to $\nabla$ and
$F$ is of Berwald type. Also the curvature tensor $R=0$ of $g$
coincides on the curvature tensor of $F$ and therefore the flag
curvature of $F$ is zero. $F$ is a flat Berwald metric therefore
by proposition 10.5.1 (page 275) of \cite{BaChSh}, $F$ is locally
Minkowskian.
\end{proof}

A famous class of Lie groups is the class of unimodular Lie
groups. A Lie group $G$ is said to be unimodular if its left
invariant Haar measure is also right invariant (see \cite{InoVek}
and \cite{Mi}.).

\begin{prop}\label{ccc}
Let $G$ be a 3-dimensional unimodular Lie group with a left
invariant Riemannian metric. Then there exists an orthonormal
basis $\{x,y,z\}$ of the Lie algebra $\frak{g}$ such that
\begin{eqnarray}
  [x,y]=c_3z \ \ \ , \ \ \ [y,z]=c_1x \ \ \ , \ \ \ [z,x]=c_2y , \
  \ \ \ c_i=\Bbb{R}.
\end{eqnarray}
(see \cite{InoVek}.)
\end{prop}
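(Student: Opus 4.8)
The plan is to use the structure theory of Lie algebras together with the standard decomposition of skew-symmetric bilinear operations on a three-dimensional inner product space. First I would fix an orientation and an arbitrary orthonormal basis of $\frak{g}$ relative to the given left invariant Riemannian metric; the Lie bracket, being alternating and bilinear on a $3$-dimensional space, can then be encoded by a single linear map. Concretely, using the vector cross product $\times$ determined by the chosen inner product and orientation, there exists a unique linear operator $L:\frak{g}\longrightarrow\frak{g}$ such that $[u,v]=L(u\times v)$ for all $u,v\in\frak{g}$. This is just the observation that both $(u,v)\mapsto[u,v]$ and $(u,v)\mapsto u\times v$ are skew-symmetric bilinear maps, and $u\times v$ is a universal such map into $\frak{g}$, so the bracket factors through it via some $L$.

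Next I would bring in unimodularity. The key step is to show that unimodularity of $G$ is equivalent to the operator $L$ being self-adjoint with respect to the inner product. The unimodularity condition translates into $\operatorname{tr}(\operatorname{ad}_u)=0$ for every $u\in\frak{g}$, and a direct computation expressing $\operatorname{ad}_u$ in terms of $L$ and $\times$ shows that $\operatorname{tr}(\operatorname{ad}_u)=0$ for all $u$ precisely when $L$ is symmetric. Once $L$ is known to be self-adjoint, the spectral theorem applies: there is an orthonormal basis $\{x,y,z\}$ of $\frak{g}$ consisting of eigenvectors of $L$, say $L(x)=c_1x$, $L(y)=c_2y$, $L(z)=c_3z$ with real eigenvalues $c_i$. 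I would choose this basis to be positively oriented so that $x\times y=z$, $y\times z=x$, $z\times x=y$.

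Finally I would simply read off the brackets. Substituting the eigenbasis relations into $[u,v]=L(u\times v)$ gives $[x,y]=L(x\times y)=L(z)=c_3z$, and similarly $[y,z]=L(x)=c_1x$ and $[z,x]=L(y)=c_2y$, which is exactly the asserted normal form with real structure constants $c_i$. The main obstacle I anticipate is the equivalence in the second step: correctly verifying that the unimodular condition $\operatorname{tr}(\operatorname{ad}_u)\equiv0$ forces the self-adjointness of $L$ rather than some weaker condition. One must be careful that $L$ is a priori an arbitrary linear map (it may have a skew-symmetric part), and it is exactly this skew part that the unimodularity hypothesis kills; everything afterward is a routine application of the spectral theorem and a bookkeeping substitution. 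Since the statement is classical and attributed to Milnor via the reference \cite{InoVek}, I would expect the argument to follow this well-trodden path.
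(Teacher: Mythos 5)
Your proposal is correct and complete: the factorization $[u,v]=L(u\times v)$, the computation showing $\operatorname{tr}(\operatorname{ad}_u)=0$ for all $u$ exactly when $L$ is self-adjoint, and the application of the spectral theorem to a positively oriented orthonormal eigenbasis together yield precisely the stated normal form. The paper itself offers no proof of this proposition, deferring entirely to the cited reference; your argument is the classical one of Milnor that underlies that citation, so there is nothing to compare beyond noting that you have supplied the proof the paper omits.
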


\begin{lem}\label{Uparallel}
Let $G$ be a 3-dimensional unimodular Lie group with a left
invariant Riemannian metric $g$ and use the notations introduced
above. $G$ admits a parallel (with respect to the Levi-Civita
connection of $g$) left invariant vector field $U=u_1x+u_2y+u_3z$
if and only if the following equations hold:
\begin{eqnarray}\label{muparallel}
  \mu_1u_1=\mu_1u_2=\mu_2u_1=\mu_2u_3=\mu_3u_1=\mu_3u_2=0,
\end{eqnarray}
where $\mu_i=\frac{1}{2}(c_1+c_2+c_3)-c_i$, $i=1,2,3$.
\end{lem}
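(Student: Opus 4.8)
The plan is to write out the Levi-Civita connection $\nabla$ of $g$ in the orthonormal basis $\{x,y,z\}$ of Proposition \ref{ccc} and then to convert the parallelism of $U$ into scalar equations. First I would feed the left invariant frame $\{x,y,z\}$ into the Koszul formula \ref{nabla}; because the frame is orthonormal and all the fields are left invariant, the terms coming from differentiating the metric vanish, so each $g(\nabla_a b,c)$ reduces to a fixed combination of $c_1,c_2,c_3$. Using $[x,y]=c_3z$, $[y,z]=c_1x$, $[z,x]=c_2y$ and writing $\mu_i=\frac12(c_1+c_2+c_3)-c_i$, the computation should give
\[
\nabla_x y=\mu_1 z,\qquad \nabla_y z=\mu_2 x,\qquad \nabla_z x=\mu_3 y,
\]
\[
\nabla_x z=-\mu_1 y,\qquad \nabla_y x=-\mu_2 z,\qquad \nabla_z y=-\mu_3 x,
\]
together with $\nabla_x x=\nabla_y y=\nabla_z z=0$. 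A useful internal check is that these obey the torsion-free identity $\nabla_a b-\nabla_b a=[a,b]$, since $\mu_1+\mu_2=c_3$, $\mu_2+\mu_3=c_1$ and $\mu_3+\mu_1=c_2$.

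Next I would use that $U$ is parallel exactly when $\nabla_x U=\nabla_y U=\nabla_z U=0$; here the coefficients $u_1,u_2,u_3$ are constants because $U$ is left invariant, so each $\nabla_v U$ is obtained by linearity in the second argument. Substituting the connection table gives
\[
\nabla_x U=-\mu_1 u_3\,y+\mu_1 u_2\,z,\quad \nabla_y U=\mu_2 u_3\,x-\mu_2 u_1\,z,\quad \nabla_z U=-\mu_3 u_2\,x+\mu_3 u_1\,y.
\]
Requiring all three vectors to vanish is equivalent to the vanishing of their six coefficients, namely $\mu_i u_j=0$ for all $i\neq j$, which is exactly the system \ref{muparallel}; conversely, if those products all vanish then the three displayed derivatives are zero and $U$ is parallel, giving both directions of the equivalence at once.

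I expect the one genuinely delicate step to be the connection computation: the Koszul formula has to be applied with strict attention to the antisymmetry of the brackets and to the cyclic labelling $[x,y]=c_3z$, $[y,z]=c_1x$, $[z,x]=c_2y$, since a single sign error there would produce the wrong pairing of the $\mu_i$ with the $u_j$. Once the table is correct, the rest is elementary: the parallelism condition is linear and homogeneous in the $u_i$, so reading off coefficients yields \ref{muparallel} directly, and no curvature computation or global argument is required.
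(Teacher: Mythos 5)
Your proposal is correct and follows essentially the same route as the paper: the paper likewise computes the connection table $\nabla_xy=\mu_1z$, $\nabla_xz=-\mu_1y$, $\nabla_yx=-\mu_2z$, $\nabla_yz=\mu_2x$, $\nabla_zx=\mu_3y$, $\nabla_zy=-\mu_3x$ (with vanishing diagonal) from the Koszul formula and then imposes $\nabla_xU=\nabla_yU=\nabla_zU=0$, merely leaving the final coefficient-reading implicit. Note that the system you obtain, $\mu_iu_j=0$ for all $i\neq j$, is the correct one: the lemma as printed lists $\mu_1u_1$ where it should list $\mu_1u_3$ (a typo, as confirmed by the paper's own use of $(c_2-c_1)u_3=0$ in the $E(1,1)$ case), so your derivation in fact repairs the statement.
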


\begin{proof}
Suppose that $\nabla$ is the Levi-Civita connection of $g$. Let $<
, >$ be the inner product induced by $g$ on $\frak{g}$.  Then by
using formula \ref{nabla} and some computations we have (also you
can see \cite{InoVek}.):
\begin{eqnarray}
  \nabla_xx&=&0 \ \ , \ \ \nabla_xy=\mu_1z, \ \ \nabla_xz=-\mu_1y, \nonumber\\
  \nabla_yx&=&-\mu_2z \ \ , \ \ \nabla_yy=0, \ \ \nabla_yz=\mu_2x, \\
  \nabla_zx&=&\mu_3y \ \ , \ \ \nabla_zy=-\mu_3x \ \ , \ \ \nabla_zz=0\nonumber.
\end{eqnarray}
Now it is suffix to let $\nabla_x U=\nabla_y U=\nabla_z U=0$.
\end{proof}

With K. Nomizu \cite{No} we consider the Lie groups $\Bbb{H},
E(1,1)$ and $E(2)$ in the following theorem.

\begin{theorem}
Consider the following 3-dimensional Lie groups:
\begin{itemize}
    \item $\Bbb{H}$: the Heisenberg group,
    \item E(1,1): group of rigid motions of Minkowski 2-space,
    \item E(2): group of rigid motions of Euclidean 2-space.
\end{itemize}
$\Bbb{H}$ and $E(1,1)$ do not admit any non-Riemannian Berwald
Randers metric of the form \ref{F}, where $g$ and $X$ are left
invariant. $E(2)$ admits a non-empty family of flat geodesically
complete locally Minkowskian Randers metrics of the form \ref{F},
where $g$ and $X$ are left invariant.
\end{theorem}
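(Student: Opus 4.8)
The plan is to treat each of the three groups by specializing Lemma~\ref{Uparallel}. By the preceding corollary, a Randers metric of the form \ref{F} built from a left invariant $g$ and left invariant $X$ is of Berwald type (hence geodesically complete, and when the curvature vanishes, flat and locally Minkowskian) precisely when $X$ is parallel with respect to the Levi-Civita connection of $g$. Moreover, since the metric is non-Riemannian exactly when $X\neq 0$, the whole theorem reduces to the question: for which of these three groups does the system \ref{muparallel} admit a nonzero solution $U$? So the first step is to record the structure constants $c_1,c_2,c_3$ for each group in the normal form of Proposition~\ref{ccc}, then compute the quantities $\mu_i=\tfrac12(c_1+c_2+c_3)-c_i$, and finally check whether $\mu_1,\mu_2,\mu_3$ force every $u_i$ to vanish.

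First I would handle the Heisenberg group $\Bbb{H}$. Its Lie algebra has a single nonzero bracket, so in the normal form exactly one $c_i$ is nonzero and the other two vanish. Computing the $\mu_i$ from this shows that the obstruction coefficients do not all collapse: the system \ref{muparallel} will force $u_1=u_2=u_3=0$, so no nonzero parallel left invariant field exists and $\Bbb{H}$ admits no non-Riemannian Berwald metric of the stated form. For $E(1,1)$ the normal form has two nonzero structure constants of opposite sign; again I would substitute into \ref{muparallel} and verify that the resulting conditions on the $\mu_i$ annihilate all three coordinates $u_i$, giving the same negative conclusion.

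For $E(2)$ the situation differs: its normal form has two nonzero structure constants of the same sign (and one zero), arranged so that the combination $\tfrac12(c_1+c_2+c_3)-c_i$ vanishes for the index corresponding to the zero bracket direction while the other two $\mu_i$ survive. Consequently \ref{muparallel} is satisfied by a nonzero $U$ supported on that one distinguished axis, yielding a one (real) parameter family of parallel left invariant vector fields. Rescaling any such $U$ so that $\sqrt{g(U,U)}<1$ produces a legitimate $X$, and Theorem (the abelian/flat computation, applied here through the parallelism of $X$) then gives that $F$ is flat; combined with Theorem~1 and its corollary, $F$ is geodesically complete and locally Minkowskian. The nonemptiness of the family comes from the freedom to scale below norm $1$, so the constructed family is non-empty.

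The main obstacle I expect is not any single computation but getting the normal-form structure constants for $\Bbb{H}$, $E(1,1)$ and $E(2)$ correct and consistent with the sign and labeling conventions of Proposition~\ref{ccc}, since the entire dichotomy hinges on the exact pattern of zero versus nonzero $\mu_i$. A secondary point requiring care is justifying that the flag curvature genuinely vanishes for $E(2)$: since $X$ is parallel, the Chern connection of $F$ coincides with the Levi-Civita connection of $g$, so it suffices to check that the Riemannian sectional curvature of this particular left invariant metric on $E(2)$ is identically zero, which is the known fact that $E(2)$ carries a flat left invariant metric. Once that flatness is in hand, the locally Minkowskian conclusion follows as in the abelian case via Proposition 10.5.1 of \cite{BaChSh}.
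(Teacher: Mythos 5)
Your overall strategy coincides with the paper's: reduce everything to the existence of a nonzero left invariant parallel vector field via Lemma~\ref{Uparallel}, read off the normal-form structure constants for each group, and then invoke the Berwald/completeness/locally-Minkowskian machinery of the earlier results. Your treatment of $\Bbb{H}$ (one nonzero $c_i$, forcing $U=0$) and of $E(1,1)$ (two nonzero $c_i$ of opposite sign, forcing $U=0$) matches the paper's Cases 1 and 2. (Both you and the paper tacitly use that a left invariant Randers metric of the form \ref{F} is Berwald \emph{only if} $X$ is parallel; the paper's preliminaries state only the converse, but this is the standard Berwald--Randers characterization, so I will not press the point.)

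There is, however, a genuine error in your $E(2)$ case, and it sits exactly where the positive conclusion is decided. With the convention $[x,y]=c_3z$, $[y,z]=c_1x$, $[z,x]=c_2y$ and $c_1,c_2>0$, $c_3=0$, one gets $\mu_1=\tfrac12(c_2-c_1)$, $\mu_2=\tfrac12(c_1-c_2)$, $\mu_3=\tfrac12(c_1+c_2)>0$: the $\mu_i$ that can vanish are the two indexed by the \emph{nonzero} structure constants (and they vanish only when $c_1=c_2$), while the $\mu$ indexed by the zero-bracket direction \emph{never} vanishes. You assert the opposite pattern ($\mu$ vanishing on the zero-bracket index, the other two surviving), and that pattern would feed into \ref{muparallel} as $\mu_1u_1=\mu_1u_2=\mu_2u_1=\mu_2u_3=0$ with $\mu_1,\mu_2\neq0$, forcing $u_1=u_2=u_3=0$ --- i.e.\ it would \emph{disprove} the existence claim you are trying to establish. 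The correct mechanism is $\mu_1=\mu_2=0$, $\mu_3\neq0$, which kills $u_1,u_2$ and leaves $u_3$ free, giving $U=u_3z$ along the zero-bracket axis. A second, related omission: this collapse happens only for the left invariant metrics with $c_1=c_2$ (the flat ones); for a generic left invariant metric on $E(2)$ with $c_1\neq c_2$ all three $\mu_i$ are nonzero and again $U=0$. Since the theorem only claims a non-empty family, it suffices to exhibit such a metric, but your argument must explicitly select it --- this is precisely why the paper abandons the $\mu_i$ bookkeeping in Case 3 and instead writes down the concrete matrix model of $\frak{e}(2)$ with the inner product \ref{inner product}, computes $\nabla$ and $R=0$ directly, and finds $U=uz$ with $0<|u|<1/\lambda$ by hand. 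Once you fix the sign pattern and fix the metric to the $c_1=c_2$ one, the rest of your argument (flatness of $g$, coincidence of the connections, Proposition 10.5.1 of \cite{BaChSh}, and geodesic completeness from homogeneity) goes through as in the paper.
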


\begin{proof}
\textbf{Case 1 ($\Bbb{H}$).} Let $\frak{h}$ be the Lie algebra of
$\Bbb{H}$ and $g$ be any left invariant Riemannian metric on
$\Bbb{H}$. By using proposition \ref{ccc} and \cite{InoVek} there
exist an orthonormal basis $\{x,y,z\}$for $\frak{h}$ such that
$c_1>0, c_2=c_3=0$. Therefore we have $\mu_1=-\frac{1}{2}c_1<0,
\mu_2=\mu_3=\frac{1}{2}c_1>0$. Now let $U=u_1x+u_2y+u_3z$ be any
left invariant vector field on $\Bbb{H}$ such that is parallel
with respect to the Levi-Civita connection $\nabla$ of $g$. By
using lemma \ref{Uparallel} we have $c_1u_1=c_1u_2=c_1u_3=0$ and
so $U=0$.\\

\textbf{Case 2 ($E(1,1)$).} Suppose that $\frak{e}(1,1)$ is the
Lie algebra of $E(1,1)$ and $g$ is a left invariant Riemannian
metric on $E(1,1)$. By using proposition \ref{ccc} and
\cite{InoVek} there exist an orthonormal basis $\{x,y,z\}$for
$\frak{e}(1,1)$ such that $c_1>0$, $c_2<0$ and $c_3=0$. Now we use
lemma \ref{Uparallel} and
get $(c_2-c_1)u_3=(c_2-c_1)u_2=(c_1-c_2)u_1=0$. Therefore $U=0$.\\

\textbf{Case 3 ($E(2)$).} We consider the Lie group $E(2)$ as
follows:
\begin{eqnarray}
  E(2)=\{\left[%
\begin{array}{ccc}
  \cos\theta & -\sin\theta & a \\
  \sin\theta & \cos\theta & b \\
  0 & 0 & 1 \\
\end{array}%
\right]| a,b,\theta\in\Bbb{R}\}.
\end{eqnarray}

The Lie algebra of $E(2)$ is of the form
\begin{eqnarray}
  \frak{e}(2)= span\{x=\left[%
\begin{array}{ccc}
  0 & 0 & 1 \\
  0 & 0 & 0 \\
  0 & 0 & 0 \\
\end{array}%
\right],y=\left[%
\begin{array}{ccc}
  0 & 0 & 0 \\
  0 & 0 & 1 \\
  0 & 0 & 0 \\
\end{array}%
\right],z=\left[%
\begin{array}{ccc}
  0 & -1 & 0 \\
  1 & 0 & 0 \\
  0 & 0 & 0 \\
\end{array}%
\right]\},
\end{eqnarray}
where
\begin{eqnarray}
  [x,y]=0 \ \ \ , \ \ \ [y,z]=x\ \ \ , \ \ \ [z,x]=y.
\end{eqnarray}
Now consider the following inner product and let $g$ be the left
invariant Riemannian metric induced by this inner product on
$E(2)$,
\begin{eqnarray}\label{inner product}
  <x,x>=<y,y>=<z,z>=\lambda^2 \ \ , \ \ <x,y>=<y,z>=<z,x>=0, \ \ \lambda>0.
\end{eqnarray}
By using formula \ref{nabla} and some computations for the
Levi-Civita connection $\nabla$ of $g$  we have:
\begin{eqnarray}
  \nabla_xx&=&0 \ \ , \ \ \nabla_xy=0, \ \ \nabla_xz=0, \nonumber\\
  \nabla_yx&=&0 \ \ , \ \ \nabla_yy=0, \ \ \nabla_yz=0, \\
  \nabla_zx&=&y \ \ , \ \ \nabla_zy=-x \ \ , \ \ \nabla_zz=0\nonumber.
\end{eqnarray}
For curvature tensor of $\nabla$ we have $R=0$ and so $(E(2),g)$
is a flat Riemannian manifold. Suppose that
$U=u_1x+u_2y+u_3z\in\frak{e}(2)$ such that $U$ is parallel with
respect to $\nabla$. A simple computation shows that $U=uz$.
Assume that $\sqrt{<U,U>}<1$, in other words let
$0<|u|<\frac{1}{\lambda}$. Hence, the left invariant Randers
metric $F$ defined by $g$ and $U$ with formula \ref{F} is of
Berwald type. Also since $F$ is of Berwald type therefore the
curvature tensor of $F$ and $g$ coincide and $F$ is of zero
constant flag curvature. Hence $F$ is locally Minkowskian.
\end{proof}
Another example of flat geodesically complete locally Minkowskian
Randers spaces is described as follows.\\

Let $\frak{g}=span\{x,y,z\}$ be a Lie algebra such that
\begin{eqnarray}
  [x,y]=\alpha y+\alpha z \ \ \ , \ \ \ [y,z]=2\alpha x \ \ \ , \ \ \ [z,x]=\alpha
  y+\alpha z \ \ \ , \ \ \ \alpha\in\Bbb{R}.
\end{eqnarray}
Also consider the inner product described by \ref{inner product}
on $\frak{g}$.\\
Suppose that $G$ is a Lie group with Lie algebra $\frak{g}$, and
$g$ is the left invariant Riemannian metric induced by the above
inner product $<.,.>$ on $G$.\\
A direct computation for the Levi-Civita connection $\nabla$ of
$(G,g)$ shows that:
\begin{eqnarray}
  \nabla_xx&=&0 \ \ , \ \ \nabla_xy=0, \ \ \nabla_xz=0, \nonumber\\
  \nabla_yx&=&-\alpha y-\alpha z \ \ , \ \ \nabla_yy=\alpha x, \ \ \nabla_yz=\alpha x, \\
  \nabla_zx&=&\alpha y+\alpha z \ \ , \ \ \nabla_zy=-\alpha x \ \ , \ \ \nabla_zz=-\alpha x\nonumber.
\end{eqnarray}
The above equations for $\nabla$ show that $R=0$, therefore
$(G,g)$ is a flat Riemannian manifold.\\
Let $U=u_1x+u_2y+u_3z\in\frak{g}$ be a left invariant vector field
such that $\nabla U=0$. By a short computation we have $U=uy-uz$.
Now suppose that $\sqrt{2}|u|\lambda=\sqrt{<U,U>}<1$ or
equivalently let $0<|u|<\frac{1}{\sqrt{2}\lambda}$. Therefore the
invariant Randers metric $F$ defined by $g$ and $U$ is a flat
geodesically complete locally Minkowskian metric on $G$. Also if
we consider $G$ is connected, $(G,F)$ will be complete.

\section{\textbf{Examples of complete Berwald spaces of non-positive flag curvature}}

In this section we give three families of complete Berwald spaces
of non-positive flag curvature also we give their explicit
formulas for computing flag curvature.\\

\textbf{Example 1.} Suppose that $\frak{g}=span\{x,y,z\}$ is a Lie
algebra with the following structure:
\begin{eqnarray}
  [x,y]=\alpha y+\alpha z \ \ , \ \ [y,z]=0 \ \ , \ \ [z,x]=-\alpha y-\alpha
  z \ \ , \ \ \alpha\in\Bbb{R}.
\end{eqnarray}
Also consider the inner product $<,>$ described by \ref{inner
product} on $\frak{g}$ with respect to the basis $\{x,y,z\}$.
Suppose that $G$ is a connected Lie group with Lie algebra
$\frak{g}$ and $g_1$ is the left invariant Riemannian metric induced by $<,>$ on $G$.\\
By using formula \ref{nabla} for the Levi-Civita connection of the
left invariant Riemannian metric $g_1$ we have:
\begin{eqnarray}
  \nabla_xx&=&0 \ \ , \ \ \nabla_xy=0 \ \ , \ \ \nabla_xz=0, \nonumber\\
  \nabla_yx&=&-\alpha y-\alpha z \ \ , \ \ \nabla_yy=\alpha x \ \ , \ \ \nabla_yz=\alpha x, \\
  \nabla_zx&=&-\alpha y-\alpha z \ \ , \ \ \nabla_zy=\alpha x \ \ , \ \ \nabla_zz=\alpha x\nonumber.
\end{eqnarray}
Also for curvature tensor of $(G,g_1)$ we have:
\begin{eqnarray}
  R(x,y)x&=&R(x,z)x= 2\alpha^2(y+z),\nonumber\\
  R(x,y)y&=&R(x,y)z=R(x,z)y=R(x,z)z=-2\alpha^2x \\
  R(y,z)x&=&R(y,z)y=R(y,z)z=0.\nonumber
\end{eqnarray}
Now let $Y=ax+by+cz$ and $V=\tilde{a}x+\tilde{b}y+\tilde{c}z$ be
two vectors in $\frak{g}$. A simple computation shows that
\begin{eqnarray}
  R(V,Y)Y=2\alpha^2(a(y+z)-x(b+c))(\tilde{a}(b+c)-a(\tilde{b}+\tilde{c})).
\end{eqnarray}
Suppose that $\{Y,V\}$ is orthonormal with respect to $<,>$, then
a direct computation for sectional curvature of $(G,g_1)$ shows
that:
\begin{eqnarray}
  K(V,Y)=-2(\alpha\lambda(a(\tilde{b}+\tilde{c})-\tilde{a}(b+c)))^2\leq0.
\end{eqnarray}
Therefore the Riemannian manifold $(G,g_1)$ is of non-positive
sectional curvature.\\
Now let $U=u_1x+u_2y+u_3z\in\frak{g}$ be a left invariant vector
field and $\nabla U=0$. So we have $U=uy-uz$. Assume that
$\|U\|=\sqrt{<U,U>}<1$ or equivalently let
$0<|u|<\frac{1}{\sqrt{2}\lambda}$.\\
Now consider the Randers metric $F_1$ defined by $g_1$ and $U$. By
using formula of $g_Y$ (Also you can see \cite{EsSa1}.) we have
the following equations:
\begin{eqnarray}
  g_Y(R(V,Y)Y,V)&=& -2(\alpha\lambda(\tilde{a}(b+c)-a(\tilde{b}+\tilde{c})))^2(1+(b-c)u\lambda^2)\nonumber \\
  g_Y(Y,Y)&=&(1+u\lambda^2(b-c))^2 \\
  g_Y(V,V)&=&1+u\lambda^2(y\lambda^2(\tilde{b}-\tilde{c})^2+(b-c))\nonumber\\
  g_Y(Y,V)&=&u\lambda^2(\tilde{b}-\tilde{c})(1+u\lambda^2(b-c))\nonumber.
\end{eqnarray}
A simple direct computation shows that $F_1$ is of non-positive
flag curvature and its flag curvature for the flag $P=span\{Y,V\}$
is obtained by the following formula:
\begin{eqnarray}
  K(P,Y)=\frac{-2(\alpha\lambda(\tilde{a}(b+c)-a(\tilde{b}+\tilde{c})))^2}{(1+u\lambda^2(b-c))^2}\leq0.
\end{eqnarray}

\textbf{Example 2.} The Lie algebra $\frak{g}$ in example 1 admits
a basis $\{x,y,z\}$ such that:
\begin{eqnarray}
  [x,y]=0 \ \ , \ \ [y,z]=0 \ \ , \ \ [z,x]=\alpha x \ \ , \ \ \alpha\in\Bbb{R}.
\end{eqnarray}
Now let $<,>$ be the inner product introduced by \ref{inner
product} with respect to this new basis. Also suppose that $g_2$
is the left invariant Riemannian metric induced by $<,>$ on $G$.
Similar to example 1 we can obtain the Levi-Civita connection of
$(G,g_2)$ as follows:
\begin{eqnarray}
  \nabla_xx&=&\alpha z \ \ , \ \ \nabla_xy=0 \ \ , \ \ \nabla_xz=-\alpha x, \nonumber\\
  \nabla_yx&=&0 \ \ , \ \ \nabla_yy=0 \ \ , \ \ \nabla_yz=0, \\
  \nabla_zx&=&0 \ \ , \ \ \nabla_zy=0 \ \ , \ \ \nabla_zz=0\nonumber.
\end{eqnarray}
Also for curvature tensor we have:
\begin{eqnarray}
  R(x,y)x=R(x,y)y=R(x,y)z&=&R(x,z)y=R(y,z)x=R(y,z)y=R(y,z)z=0,\nonumber\\
  R(x,z)x&=&\alpha^2z \ \ , \ \ R(x,z)z=-\alpha^2x.
\end{eqnarray}
Therefore for an orthonormal basis
$\{Y=ax+by+cz,V=\tilde{a}x+\tilde{b}y+\tilde{c}z\}$ we have:
\begin{eqnarray}
  K(V,Y)=-(\alpha\lambda(\tilde{a}c-\tilde{c}a))^2\leq0,
\end{eqnarray}
which shows $(G,g_2)$ is of non-positive sectional curvature.\\
It is easy to show that the only left invariant vector fields
parallel with respect to $\nabla$ are of the form $U=uy$,
$u\in\Bbb{R}$. Assume that
$0<|u|<\frac{1}{\lambda}$, so we have $\|U\|<1$.\\
Let $F_2$ be the Randers metric defined by $g_2$ and $U$. Then for
$F_2$ we have:
\begin{eqnarray}
  g_Y(R(V,Y)Y,V)&=& -(\alpha\lambda(\tilde{a}c-\tilde{c}a))^2(1+bu\lambda^2)\nonumber \\
  g_Y(Y,Y)&=&(1+ub\lambda^2)^2 \\
  g_Y(V,V)&=&1+(u\tilde{b}\lambda^2)^2+ub\lambda^2\nonumber\\
  g_Y(Y,V)&=&u\tilde{b}\lambda^2(1+ub\lambda^2)\nonumber.
\end{eqnarray}
Therefore $(G,F_2)$ is of non-positive flag curvature as follows:
\begin{eqnarray}
  K(P,Y)=\frac{-(\alpha\lambda(\tilde{a}c-\tilde{c}a))^2}{(1+ub\lambda^2)^2}\leq0,
\end{eqnarray}
where $P=span\{Y,V\}$.\\

\textbf{Example 3.} The Lie algebra $\frak{g}$ described in
example 1 also has a basis of the form $\{x,y,z\}$ such that:
\begin{eqnarray}
  [x,y]=0 \ \ , \ \ [y,z]=\alpha y+\alpha z \ \ , \ \ [z,x]=0 \ \ , \ \ \alpha\in\Bbb{R}.
\end{eqnarray}
Now consider the inner product defined by \ref{inner product} with
respect to this basis. Also let $g_3$ be the left invariant
Riemannian metric induced by this inner product. Similar to the
above examples for $(G,g_3)$ we have:
\begin{eqnarray}
  \nabla_xx&=&0 \ \ , \ \ \nabla_xy=0 \ \ , \ \ \nabla_xz=0, \nonumber\\
  \nabla_yx&=&0 \ \ , \ \ \nabla_yy=-\alpha z \ \ , \ \ \nabla_yz=\alpha y, \\
  \nabla_zx&=&0 \ \ , \ \ \nabla_zy=-\alpha z \ \ , \ \ \nabla_zz=\alpha
  y\nonumber,
\end{eqnarray}
and,
\begin{eqnarray}
  R(x,y)x=R(x,y)y=R(x,y)z&=&R(x,z)y=R(x,z)x=R(y,z)x=R(x,z)z=0,\nonumber\\
  R(y,z)y&=&2\alpha^2z \ \ , \ \ R(y,z)z=-2\alpha^2y.
\end{eqnarray}
Hence for an orthonormal basis
$\{Y=ax+by+cz,V=\tilde{a}x+\tilde{b}y+\tilde{c}z\}$ we have:
\begin{eqnarray}
  K(V,Y)=-2(\alpha\lambda(\tilde{b}c-\tilde{c}b))^2\leq0.
\end{eqnarray}
Therefore $(G,g_3)$ is of non-positive sectional curvature.\\
The only left invariant vector fields parallel with respect to
$\nabla$ are of the form $U=ux$, $u\in\Bbb{R}$. Let
$0<|u|<\frac{1}{\lambda}$, so we have $\|U\|<1$.\\
Let $F_3$ be the Randers metric defined by $g_3$ and $U$. Then for
$F_3$ we have:
\begin{eqnarray}
  g_Y(R(V,Y)Y,V)&=& -2(\alpha\lambda(\tilde{b}c-\tilde{c}b))^2(1+au\lambda^2)\nonumber \\
  g_Y(Y,Y)&=&(1+ua\lambda^2)^2 \\
  g_Y(V,V)&=&1+au\lambda^2+\tilde{a}^2u^2\lambda^4\nonumber\\
  g_Y(Y,V)&=&\tilde{a}u\lambda^2(1+au\lambda^2)\nonumber.
\end{eqnarray}
The above equations show that $(G,F_3)$ is of non-positive flag
curvature with the following formula:
\begin{eqnarray}
  K(P,Y)=\frac{-2(\alpha\lambda(\tilde{b}c-\tilde{c}b))^2}{(1+au\lambda^2)^2}\leq0,
\end{eqnarray}
where $P=span\{Y,V\}$.\\

All Finsler metrics introduced in examples 1, 2 and 3 are
geodesically complete and the connected Lie group $G$ with these
metrics is complete.


\bibliographystyle{amsplain}

\end{document}